\DeclareMathOperator{\Z}{\mathbb{Z}}
\DeclareMathOperator{\Q}{\mathbb{Q}}
\theoremstyle{plain}
\newtheorem{thm}{Theorem}[section]
\theoremstyle{definition}
\newtheorem{cor}[thm]{Corollary}
\newtheorem{lemma}[thm]{Lemma}
\newtheorem{conj}[thm]{Conjecture}
\begin{document}
\title{Irreducible quadratic polynomials and Euler's function}
\author{Noah Lebowitz-Lockard}
\maketitle
\begin{abstract} Let $P(x)$ be an irreducible quadratic polynomial in $\Z[x]$. We show that for almost all $n$, $P(n)$ does not lie in the range of Euler's totient function.
\end{abstract}

\section{Introduction} Let $V(x)$ be the number of $n \leq x$ in the range of Euler's $\varphi$-function. In 1929, Pillai proved that almost all numbers lie outside the range of the $\varphi$-function \cite{Pi}, namely that
\[V(x) = O\left(\frac{x}{(\log x)^{(\log 2)/e}}\right).\]
Multiple people (\cite{E}, \cite{EH1}, \cite{EH2}, \cite{P}, \cite{MP}) improved this bound. Ford established the order of magnitude of $V(x)$ \cite{F}:
\[V(x) = \Theta\left(\frac{x}{\log x} \exp(C(\log_3 x - \log_4 x)^2 + D\log_3 x - (D + (1/2) - 2C) \log_2 x)\right),\]
with $C \approx 0.82$ and $D \approx 2.18$.

For a given function $f$, we define
\[V_f (x) = \#\{n \leq x: \exists m\ \textrm{s.t.}\ \varphi(m) = f(n)\}.\]
Pollack and Pomerance proved that almost all squares lie outside the range of the $\varphi$-function \cite{PolP}. Specifically, for $f(x) = x^2$,
\[\frac{x}{(\log x)^2 (\log \log x)^2} \ll V_f (x) \ll \frac{x}{(\log x)^{0.0063}}.\]
Let $P(x) = ax^2 + bx + c$ be an irreducible quadratic polynomial. We show that
\[V_P (x) = O\left(\frac{x}{(\log x)^{0.03}}\right).\]
Hence, for almost all $n$, $P(n)$ lies outside the range of the totient function.

The only odd number in the range of the totient function is $1$. If $P(x)$ only takes odd values, then $V_P (x)$ is the number of positive solutions $n \leq x$ of $P(n) = 1$. In this case, $V_P (x) \leq 2$. We also show that if $P(x)$ is never a multiple of $4$, then $V_P (x) \ll x/\log x$. Finally, we improve our bounds on $V_P (x)$ assuming the abc Conjecture.

\section{Outline}

Suppose $P(n)$ in the range of the $\varphi$-function. Let $p$ be the largest prime number for which there exists a number $m$ such that $p | m$ and $\varphi(m) = P(n)$. By definition, $p - 1 | P(n)$. We write $P(n) = (p - 1)v$. We choose a number $T = o(x)$, which we will optimize later. There are three cases:
\begin{enumerate}
\item{$p > 4ax,$}
\item{$T < p \leq 4ax,$}
\item{$p \leq T.$}
\end{enumerate}

For a given number $k$, let $\rho(k)$ be the number of solutions to the congruence $P(n) \equiv 0\ \textrm{mod}\ k$. Note that $\rho$ is a multiplicative function. Let $D$ be the discriminant of $P(x)$. If a prime $q$ does not divide $2a$, then the solutions to $P(x) \equiv 0\ \textrm{mod}\ q$ are
\[x \equiv \frac{-b \pm \sqrt{D}}{2a}\ \textrm{mod}\ q.\]
Hence, for a given $q \nmid 2aD$,
\[\rho(q) = \left\{\begin{array}{ll}
2, & \textrm{if}\ \left(\frac{D}{q}\right) = 1, \\
0, & \textrm{if}\ \left(\frac{D}{q}\right) = -1.
\end{array}\right.\]

For all but finitely many $q$, $q \nmid 2aD$. By the Chebotarev Density Theorem, the primes which split in $\Q[\sqrt{D}]$ and the primes which are inert in $\Q[\sqrt{D}]$ both have density $1/2$. In other words,
\[\lim_{x \to \infty} \frac{1}{\pi(x)} \#\left\{q \leq x: \left(\frac{D}{q}\right) = 1\right\} = \lim_{x \to \infty} \frac{1}{\pi(x)} \#\left\{q \leq x: \left(\frac{D}{q}\right) = -1\right\} = \frac{1}{2}.\]

\section{A large factor of the form $p - 1$}

Let $V_1$ be the number of $n \leq x$ for which $p > 4ax$.

\begin{thm} We have
\[V_1 = O\left(\frac{x(\log \log x)^5}{(\log x)^{1 - (e(\log 2)/2)}}\right).\]
\end{thm}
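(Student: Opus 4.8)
I would first reduce $V_1$ to a weighted count of totient values, then sieve. We may assume $a\ge 1$, since if $a<0$ then $P(n)<0$ for all large $n$ and $V_1=O(1)$.

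\textbf{Step 1: reduction.} Let $n\le x$ contribute to $V_1$, with associated prime $p>4ax$, so $P(n)=(p-1)v$ where $v=P(n)/(p-1)$. Since $P(n)\le an^2+|b|n+|c|<\tfrac32ax^2$ for $x$ large and $p-1>4ax-1>3ax$, we get $v<x/2$. If $p^2\mid m$ then $p\mid\varphi(m)=(p-1)v$, so $p\mid v$, whence $v\ge p>4ax>v$, absurd; hence $p\,\|\,m$, $m=pm'$ with $p\nmid m'$, and $v=\varphi(m')$ is a totient value. As $v\mid P(n)$ we have $\rho(v)\ge1$ — so every prime factor of $v$ not dividing $2aD$ splits in $\Q[\sqrt D]$, and on this admissible set one checks $\rho(v)\ll_P 2^{\omega(v)}$ — and $P(n)/v+1=p$ is a prime $>4ax$. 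Grouping by $v$,
\[
V_1\le\sum_{\substack{v<x/2\\ v\ \mathrm{adm.\ totient}}}N(v),\qquad N(v):=\#\{\,n\le x:\ v\mid P(n),\ P(n)/v+1\ \text{prime}>4ax\,\}.
\]

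\textbf{Step 2: sieving the main range $v\le x/C$.} Fix a large constant $C$, such a $v$, and one of the $\rho(v)$ residues $n_0\bmod v$ with $v\mid P(n_0)$; writing $n=n_0+vt$, $P(n_0)=vw_0$, a direct computation gives $P(n_0+vt)/v+1=R_v(t)$ with
\[
R_v(t)=av\,t^2+(2an_0+b)\,t+(w_0+1),\qquad \operatorname{disc}R_v=D-4av .
\]
If $D-4av$ is a perfect square, $R_v$ is reducible and is prime for $O(1)$ values of $t$; such $v$ contribute $O(x^{1/2})$ in all. Otherwise Brun's sieve gives $\#\{t\le x/v:R_v(t)\ \text{prime}\}\ll\frac{x/v}{\log(x/v)}\prod_{q\mid 2a(D-4av)}(1+q^{-1})\ll\frac{x/v}{\log(x/v)}\log\log x$ (valid as $x/v\ge C$). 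Summing over the $\rho(v)$ residues and then dyadically over $v$ — where the loss $\sum_k k^{-1}\asymp\log\log x$ costs only a further $\log\log x$ and never a power of $\log x$ — one gets $\sum_{v\le x/C}N(v)\ll x(\log\log x)^{2}(\log x)^{-\delta}$, \emph{provided} $\sum_{v\le z,\ \mathrm{adm.\ totient}}\rho(v)\ll z(\log z)^{-\delta}(\log\log z)^{O(1)}$. For the complementary range $x/C<v<x/2$ one has $x/v\in(2,C]$, so $N(v)\le\rho(v)(x/v+1)\ll_C\rho(v)$ and the contribution is again $\ll_C\sum_{x/C<v<x/2,\ \mathrm{adm.}}\rho(v)$, controlled by the same inequality. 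Thus $V_1\ll x(\log\log x)^{O(1)}(\log x)^{-\delta}$, and the theorem follows with $\delta=1-(e\log2)/2$.

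\textbf{Step 3: the main obstacle — a weighted Pillai estimate.} Everything reduces to
\[
\sum_{\substack{v\le z\\ v\ \text{a totient},\ \rho(v)\ge1}}\rho(v)\ \ll\ \frac{z}{(\log z)^{\,1-(e\log2)/2}}\,(\log\log z)^{O(1)} .
\]
Since $\rho\asymp 2^{\omega}$ on the admissible set, this is a sharpening of Pillai's theorem (almost all integers are non-totients) that simultaneously carries the multiplicity weight $2^{\omega}$ and restricts to integers all of whose prime factors split in $\Q[\sqrt D]$. I would prove it by Erdős's iterative method: for a totient $v=\varphi(m)$ strip off the largest prime $p\mid m$ to write $v=p^{e-1}(p-1)\varphi(m')$ with $P^{+}(m')<p$; then (i) the usable primes $p$ form a set of relative density $\asymp(\log)^{-1/2}$ because $p-1$ must itself be built from split primes, (ii) each stripped odd prime contributes another factor $2$ to $v$, and (iii) the smooth cofactor $m'$ is weighted in Dickman fashion. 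Balancing the $2^{\omega}$ gain against these losses, optimised over the number and sizes of the prime factors of $m$, is what yields the exponent $1-(e\log2)/2$ — and, after bookkeeping of the $\log\log$ losses in the sieve and in this induction, the power $5$. Steps 1--2 are routine (the only serious analytic input being the Brun upper bound for primes represented by an integral quadratic and the Chebotarev density of split primes already recorded above); the hard part is the optimisation in Step 3.
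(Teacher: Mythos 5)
Your Steps 1--2 follow the paper's skeleton (reduce to $v=\varphi(m')<x/2$ a totient with $v\mid P(n)$, pass to the quadratic $R_v$ of discriminant $D-4av$, sieve in $u$), but there are two genuine gaps. First, the Brun upper bound for $\#\{t\le x/v: R_v(t)\ \text{prime}\}$ is not $\ll\frac{x/v}{\log(x/v)}\prod_{q\mid 2a(D-4av)}(1+q^{-1})$. For $q\nmid 2av(D-4av)$ one has $\rho_{R_v}(q)=1+\left(\frac{D-4av}{q}\right)$, so the sieve product carries the factor $\prod_{q<x/v}\bigl(1-\frac{1}{q}\left(\frac{D-4av}{q}\right)\bigr)$, essentially $L(1,\chi_{D-4av})^{-1}$, which is \emph{not} $O(\log\log x)$ uniformly in $v$: unconditionally it is only $O(|D-4av|^{\epsilon})$ (Siegel). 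Since $D-4av$ ranges over $\asymp x$ values, you cannot discard this factor. The paper spends most of its effort here: it invokes a Pollack--Pomerance lemma showing the product is $\le(\log\log|3d|)^2$ for all but $O(x^{\epsilon})$ squarefree kernels $d$, counts the $v$ with exceptional kernel ($O(x^{1/2+\epsilon})$ of them via the $O(x^{1/2})$-per-kernel bound), and treats the exceptional $v$ separately with a further split at $U=(\log x)^2$. None of this is in your write-up, and without it your claimed bound $\frac{x/v}{\log(x/v)}\log\log x$ is simply unjustified.

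Second, you correctly isolate the key estimate $\sum_{v\le z,\ v\ \text{totient}}\rho(v)\ll z(\log z)^{-(1-(e\log 2)/2)}(\log\log z)^{O(1)}$, but you do not prove it: you propose re-running Erd\H{o}s's iterative stripping argument with the weight $2^{\omega}$ and the split-prime restriction, and explicitly defer ``the optimisation.'' That optimisation is the entire content of the exponent, and it is far from clear your route produces $1-(e\log 2)/2$. The paper's actual argument is shorter and different: apply H\"older, $\sum_{v\ \text{totient}}\rho(v)\le\bigl(\sum_{v\le z}\rho(v)^A\bigr)^{1/A}V(z)^{1/B}$, bound $\sum\rho(v)^A\ll z(\log z)^{2^{A-1}-1}$ by the Shiu/Pollack mean-value theorem (using $\rho(q)^A=2^A$ on the split primes, of density $1/2$), use the Erd\H{o}s bound $V(z)\ll z/(\log z)^{1-\epsilon}$, and minimise $\frac{2^A}{2A}-1$ at $A=1/\log 2$ to get the exponent $(e\log 2)/2-1$. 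As it stands your proposal asserts, rather than establishes, both the sieve bound and the central weighted Pillai estimate, so the proof is incomplete at its two decisive points.
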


\begin{proof} We write $\varphi(m) = P(n)$ with $p | m$ for some $p > 4ax$. We first bound $m$. Note that $P(n) = an^2 + bn + c \leq 2an^2 \leq 2ax^2$ for $x$ sufficiently large. By \cite[Theorem $328$]{HarWr},
\[\liminf_{k \to \infty} \frac{\varphi(k) \log \log k}{k} = e^{-\gamma},\]
where $\gamma$ is the Euler-Mascheroni constant. Thus, $m \ll x^2 \log \log x$.

By partial summation, the number of $m \ll x^2 \log \log x$ with a divisor of the form $p^2$ with $p > 4ax$ is $O(x \log \log x/\log x)$. Hence, we may assume that $p^2$ does not divide $m$. We write $m = pr$ with $p \nmid r$. So, $\varphi(m) = P(n) = (p - 1)v$ with $\varphi(r) = v$. Because $p > 4ax$ and $P(n) \leq 2ax^2$, $v < x/2$ as well.

We write
\[n \equiv t_1, \ldots, t_{\rho(v)}\ \textrm{mod}\ v,\]
with $0 \leq t_i < v$ for all $i \leq \rho(v)$. Fix $i$ and let $t = t_i$. Let $n = uv + t$. We have
\begin{eqnarray*}
p & = & \frac{P(n)}{v} + 1 \\
& = & \frac{P(uv + t)}{v} + 1 \\
& = & \frac{a(uv + t)^2 + b(uv + t) + c}{v} + 1 \\
& = & avu^2 + (2at + b)u + \left(\frac{at^2 + bt + c}{v} + 1\right).
\end{eqnarray*}
So, we can recast the problem in terms of $u$. Given $v$ and $a$, we look for the number of values of $u$ for which the quadratic expression above is prime, then sum over all $v$ and $a$. In other words, we want to bound the size of
\[M = M_{v, t} = \{u \leq x/v : R(u)\ \textrm{is prime}\},\]
where
\[R(u) = avu^2 + (2at + b)u + \left(\frac{at^2 + bt + c}{v} + 1\right).\]

The discriminant of $R$ is $D - 4av$. If $R$ is reducible, then $D - 4av$ is a square. The number of $v$ for which $D - 4av$ is non-negative is $O(1)$ for $P$ fixed. For each value of $v$, the number of corresponding $n$ is also $O(1)$ with respect to $P$. Because there are $O(1)$ values of $n$ for which $R$ is reducible, we assume that $R$ is irreducible. Brun's Sieve \cite[Theorem $2.6$]{HalbR} gives us
\[\# M \ll \frac{x}{v} \prod_{\substack{q < x/v \\ \rho_R (q) \neq q}} \left(1 - \frac{\rho_R (q)}{q}\right),\]
where $\rho_R (q)$ the number of solutions to $R(u) \equiv 0\ \textrm{mod}\ q$ for a given prime $q$.

The number of possible $n$ is the sum of $\# M$ over all possible $v$ and $t$. In addition, $v$ lies in the range of Euler's function. For notational convenience, we let $\sum'$ have the condition that $D - 4av$ is not a square. We have
\[V_1 \ll \sideset{}{'}\sum_{\substack{v < x/2 \\ v \in \varphi(\Z_+)}} \sum_{\substack{0 \leq t < v \\ P(t) \equiv 0 (v)}} \frac{x}{v} \prod_{\substack{2 < q < x/v \\ \rho_R (q) \neq q}} \left(1 - \frac{\rho_R (q)}{q}\right).\]

We now bound
\[\sideset{}{'}\sum_{\substack{v < x/2 \\ v \in \varphi(\Z_+)}} \sum_{\substack{0 \leq t < v \\ P(t) \equiv 0(v)}} \frac{x}{v} \prod_{\substack{q < x/v \\ \rho_R (q) \neq q}} \left(1 - \frac{\rho_R (q)}{q}\right) \ll x\sideset{}{'}\sum_{\substack{v < x/2 \\ v \in \varphi(\Z_+)}} \frac{\rho(v)}{v} \prod_{\substack{2 < q < x/v \\ q \nmid av(D - 4av) \\ \left(\frac{D - 4av}{q}\right) = 1}} \left(1 - \frac{2}{q}\right).\]
For the product, we multiply by a similar product over the $q$ dividing $2av(D - 4av)$ in order to make it easier to manipulate:
\[x\sideset{}{'}\sum_{\substack{v < x/2 \\ v \in \varphi(\Z_+)}} \frac{\rho(v)}{v} \prod_{\substack{q < x/v \\ \left(\frac{D - 4av}{q}\right) = 1}} \left(1 - \frac{2}{q}\right) \prod_{\substack{2 < q < x/v \\ q | av(D - 4av) \\ \left(\frac{D - 4av}{q}\right) = 1}} \left(1 - \frac{2}{q}\right)^{-1}.\]
We simplify the second product as follows:
\begin{eqnarray*}
\prod_{\substack{q < x/v \\ q | av(D - 4av) \\ \left(\frac{D - 4av}{q}\right) = 1}} \left(1 - \frac{2}{q}\right)^{-1} & \ll & \prod_{q | v(D - 4av)} \left(1 - \frac{1}{q}\right)^{-2} \\
& = & \left(\frac{v(D - 4av)}{\varphi(v|D - 4av|)}\right)^2 \\
& \ll & (\log \log (v|D - 4av|))^2 \\
& \ll & (\log \log v)^2.
\end{eqnarray*}
We now have
\[V_1 \ll x \sideset{}{'}\sum_{\substack{v < x/2 \\ v \in \varphi(\Z_+)}} \frac{\rho(v) (\log \log v)^2}{v} \prod_{\substack{q < x/v \\ \left(\frac{D - 4av}{q}\right) = 1}} \left(1 - \frac{2}{q}\right).\]

For small $v$ it is not difficult to show that $D - 4av$ is a quadratic residue mod $q$ for about half of all $q < x/v$. Unfortunately, $v$ may be large enough relative to $x$ that this is not always true. We bound the product from above:
\begin{eqnarray*}
\prod_{\substack{q < x/v \\ \left(\frac{D - 4av}{q}\right) = 1}} \left(1 - \frac{2}{q}\right) & = & \prod_{2 < q < x/v} \left(1 - \frac{1}{q} \left(1 + \left(\frac{D - 4av}{q}\right)\right)\right) \prod_{\substack{2 < q < x/v \\ q | D - 4av}} \left(1 - \frac{1}{q}\right)^{-1} \\
& \ll & \frac{|D - 4av|}{\varphi(|D - 4av|)} \prod_{2 < q < x/v} \left(1 - \frac{1}{q}\right) \prod_{2 < q < x/v} \left(1 - \frac{1}{q} \left(\frac{D - 4av}{q}\right)\right) \\
& \ll & \frac{\log \log v}{\log (x/v)} \prod_{2 < q < x/v} \left(1 - \frac{1}{q} \left(\frac{D - 4av}{q}\right)\right).
\end{eqnarray*}
Therefore,
\begin{eqnarray*}
V_1 & \ll & x\sideset{}{'}\sum_{\substack{v < x/2 \\ v \in \varphi(\Z_+)}} \frac{\rho(v) (\log \log v)^3}{v\log(x/v)} \prod_{2 < q < x/v} \left(1 - \frac{1}{q} \left(\frac{D - 4av}{q}\right)\right) \\
& \ll & x(\log \log x)^3 \sideset{}{'}\sum_{\substack{v < x/2 \\ v \in \varphi(\Z_+)}} \frac{\rho(v)}{v\log(x/v)} \prod_{2 < q < x/v} \left(1 - \frac{1}{q} \left(\frac{D - 4av}{q}\right)\right).
\end{eqnarray*}

We combine Lemmas $6$ and $8$ of \cite{PolP} into one result and apply this result to the Kronecker symbol.

\begin{lemma} For all squarefree $d$ and $\epsilon > 0$,
\[\prod_{2 < q \leq y} \left(1 - \frac{1}{q} \left(\frac{d}{q}\right)\right) = O(d^\epsilon).\]
In addition, the number of (not necessarily squarefree) $d \leq x$ for which 
\[\prod_{2 < q \leq y} \left(1 - \frac{1}{q} \left(\frac{d}{q}\right)\right) \leq (\log \log |3d|)^2\]
does not hold for some $y$ is $O(x^\epsilon)$.
\end{lemma}

If $q \nmid D - 4av$ and $d$ is the squarefree part of $D - 4av$, then
\[\left(\frac{D - 4av}{q}\right) = \left(\frac{d}{q}\right).\]
When $d$ is the squarefree part of $D - 4av$,
\begin{eqnarray*}
\prod_{2 < q \leq y} \left(1 - \frac{1}{q} \left(\frac{D - 4av}{q}\right)\right) & = & \prod_{\substack{2 < q \leq y \\ q \nmid D - 4av}} \left(1 - \frac{1}{q} \left(\frac{d}{q}\right)\right) \\
& = & \prod_{\substack{2 < q \leq y \\ q | D - 4av}} \left(1 - \frac{1}{q} \left(\frac{d}{q}\right)\right)^{-1} \prod_{2 < q \leq y} \left(1 - \frac{1}{q} \left(\frac{d}{q}\right)\right) \\
& \leq & \prod_{\substack{2 < q \leq y \\ q | D - 4av}} \left(1 - \frac{1}{q}\right)^{-1} \prod_{2 < q \leq y} \left(1 - \frac{1}{q} \left(\frac{d}{q}\right)\right) \\
& = & \frac{D - 4av}{\varphi(|D - 4av|)} \prod_{2 < q \leq y} \left(1 - \frac{1}{q} \left(\frac{d}{q}\right)\right) \\
& \ll & (\log \log |3(D - 4av)|) \prod_{2 < q \leq y} \left(1 - \frac{1}{q} \left(\frac{d}{q}\right)\right).
\end{eqnarray*}

For a given squarefree number $d$, the number of numbers $\leq x$ with squarefree part $d$ is $O(x^{1/2})$.  For all but $O(x^{(1/2) + \epsilon})$ numbers $v \leq x/2$,
\[\prod_{2 < q \leq y} \left(1 - \frac{1}{q} \left(\frac{D - 4av}{q}\right)\right) \leq (\log \log |3(D - 4av)|)^3.\]

Let $S(k)$ be the squarefree part of $k$. We split our sum into two parts.

Suppose $S(D - 4av) \notin \mathcal{D}$:
\[\sideset{}{'}\sum_{\substack{v < x/2 \\ v \in \varphi(\Z_+)}} \frac{\rho(v)}{v \log(x/v)} \prod_{2 < q < x/v} \left(1 - \frac{1}{q} \left(\frac{D - 4av}{q}\right)\right) \ll (\log \log x)^3 \sum_{\substack{v < x/2 \\ v \in \varphi(\Z_+)}} \frac{\rho(v)}{v\log(x/v)}.\]
We bound this sum using dyadic intervals:
\begin{eqnarray*}
\sum_{\substack{v < x/2 \\ v \in \varphi(\Z_+)}} \frac{\rho(v)}{v\log(x/v)} & = & \sum_{i < \log x/\log 2} \sum_{\substack{2^i < x/v \leq 2^{i + 1} \\ v \in \varphi(\Z_+)}} \frac{\rho(v)}{v\log(x/v)} \\
& \ll & \sum_{i < \log x/\log 2} \frac{2^i}{x\log(2^i)} \sum_{\substack{2^i < x/v \leq 2^{i + 1} \\ v \in \varphi(\Z_+)}} \rho(v) \\
& \ll & \sum_{i < \log x/\log 2} \frac{1}{i} \left(\frac{1}{x/2^i}\right) \sum_{\substack{v < x/2^i \\ v \in \varphi(\Z_+)}} \rho(v).
\end{eqnarray*}

We bound the sum of the $\rho(v)$ terms using H{\" o}lder's Inequality. Let $A, B > 1$ satisfy $(1/A) + (1/B) = 1$. Recall that $V(x)$ is the number of $n \leq x$ in the range of $\varphi$. For the following equation, we use the fact that $V(x) \ll x/(\log x)^{1 - \epsilon}$ for all $\epsilon > 0$. We have
\begin{eqnarray*}
\sum_{\substack{v < x/2^i \\ v \in \varphi(\Z_+)}} \rho(v) & \ll & \left(\sum_{v < x/2^i} \rho(v)^A\right)^{1/A} \left(\sum_{\substack{v < x/2^i \\ v \in \varphi(\Z_+)}} 1^B\right)^{1/B} \\
& \ll & \left(\sum_{v < x/2^i} \rho(v)^A\right)^{1/A} (V(x/2^i))^{1/B} \\
& \ll & \left(\sum_{v < x/2^i} \rho(v)^A\right)^{1/A} \left(\frac{x/2^i}{(\log(x/2^i))^{1 - \epsilon}}\right)^{1/B}.
\end{eqnarray*}
In order to bound the sum of $\rho(v)^A$, we use the following Brun-Titchmarsh-like theorem for multiplicative functions (the $k = 1$, $y = x$ cases of \cite{S}, \cite{Pol}).
\begin{thm} Let $f$ be a non-negative multiplicative function satisfying the following conditions:
\begin{enumerate}
\item{There is a positive constant $A_1$ such that $f(p^r) \leq A_1^r$ for all prime $p$ and non-negative $r$.}
\item{For all $\epsilon > 0$, there is a positive constant $A_2 = A_2 (\epsilon)$ for which $f(n) \leq A_2 n^\epsilon$ for all $n$.}
\end{enumerate}
\begin{enumerate}[(a)]
\item{We have
\[\sum_{n \leq x} f(n) \ll \frac{x}{\log x} \exp\left(\sum_{p \leq x} \frac{f(p)}{p}\right).\]}
\item{In addition,
\[\sum_{p \leq x} f(p - 1) \ll \frac{x}{\log x} \exp\left(\sum_{p \leq x} \frac{f(p) - 1}{p}\right).\]}
\end{enumerate}
\end{thm}

We show that $\rho$ satisfies the conditions of this theorem. For a given prime $p$, let $p^{\sigma_1} \| D$. It is well-known (see \cite[Theorem $53$]{N}) that if some coefficient of $P(x)$ is not a multiple of $p$, then $\rho(p^r) \leq \rho(p^{2\sigma_1 + 1})$. Suppose $P(x) = p^{\sigma_2} Q(x)$ where $\sigma_2$ is maximal, i.e. some coefficient of $Q(x)$ is not a multiple of $p$. For all $r \geq \sigma_2$, $\rho(p^r) = \rho_Q (p^{r - \sigma_2})$ because each solution to the congruence $Q(x) \equiv 0\ \textrm{mod}\ p^{r - \sigma_2}$ lifts to a solution of $P(x) \equiv 0\ \textrm{mod}\ p^r$. (If $r \leq \sigma_2$, then $\rho(p^r) = p^r \leq p^{\sigma_2})$. So,
\[\rho(p^r) = \rho_Q (p^{r - \sigma_2}) \leq \rho_Q (p^{2(\sigma_1 - 2\sigma_2) + 1})\]
because the discriminant of $Q(x)$ is $D/p^{2\sigma_2}$. For all $r$,
\[\rho(p^r) \leq \max(\rho_Q (p^{2(\sigma_1 - 2\sigma_2 + 1}), p^{\sigma_2}).\]
For all but finitely many $p$, $\sigma_1 \leq 2$. Thus, $\rho(p^r)$ is bounded by a constant $C$, giving us $(1)$. Let $\omega(n)$ be the number of distinct prime factors of $n$. We have
\[\rho(n) \leq C^{\omega(n)} \ll C^{\log n/\log \log n} = o(n^\epsilon)\]
for all $\epsilon > 0$, implying $(2)$.

Therefore,
\begin{eqnarray*}
\sum_{v < x/2^i} \rho(v)^A & \ll & \frac{x/2^i}{\log(x/2^i)} \exp\left(\sum_{q < x/2^i} \frac{\rho(q)^A}{q}\right) \\
& \ll & \frac{x/2^i}{\log(x/2^i)} \exp\left(\sum_{q | 2aD} \frac{q^A}{q} + \sum_{\substack{q < x/2^i \\ q \nmid 2aD \\ \left(\frac{D}{q}\right) = 1}} \frac{2^A}{q}\right) \\
& \ll & \frac{x/2^i}{\log(x/2^i)} \exp\left(\sum_{\substack{q < x/2^i \\ \left(\frac{D}{q}\right) = 1}} \frac{2^A}{q}\right) \\
& \ll & \frac{x/2^i}{\log(x/2^i)} \exp(2^{A - 1} \log \log(x/2^i)) \\
& \ll & (x/2^i)(\log(x/2^i))^{2^{A - 1} - 1}.
\end{eqnarray*}
Plugging this into our earlier inequality gives us
\[\sum_{\substack{v < x/2^i \\ v \in \varphi(\Z_+)}} \rho(v) \ll \left(\frac{x}{2^i}\right) \left(\log\left(\frac{x}{2^i}\right)\right)^{\frac{2^{A - 1} - 1}{A} - \frac{1}{B} + \frac{\epsilon}{B}} = \left(\frac{x}{2^i}\right)\left(\log\left(\frac{x}{2^i}\right)\right)^{\frac{2^A}{2A} - 1 + \left(1 - \frac{1}{A}\right)\epsilon}.\]
The minimum value of $(2^A/(2A)) - 1$ is $((e\log 2)/2) - 1 < 0$, which occurs at $A = 1/\log 2$. Hence,
\[\sum_{\substack{v < x/2^i \\ v \in \varphi(\Z_+)}} \rho(v) \ll \frac{x/2^i}{(\log(x/2^i))^{1 - ((e\log 2)/2) - (1 - \log 2)\epsilon}},\]
giving us
\begin{eqnarray*}
\sum_{\substack{v < x/2 \\ v \in \varphi(\Z_+)}} \frac{\rho(v)}{v\log(x/v)} & \ll & \sum_{i < \log x/\log 2} \frac{1}{i} \left(\frac{1}{x/2^i}\right) \sum_{\substack{v < x/2^i \\ v \in \varphi(\Z_+)}} \rho(v) \\
& \ll & \sum_{i < \log x/\log 2} \frac{1}{i (\log(x/2^i))^{1 - ((e\log 2)/2) - (1 - \log 2)\epsilon}}.
\end{eqnarray*}

For notational convenience, we replace $\epsilon$ with $(1 - \log 2)\epsilon$. We may now finish off our dyadic interval. In order to bound this sum, we split it into two cases: $i > K$ and $i < K$, with $K = (\log x)^{O(1)}$:
\[\sum_{i < K} \frac{1}{i(\log(x/2^i))^{1 - (e(\log 2)/2) - \epsilon}} \ll \sum_{i < K} \frac{1}{i(\log(x/2^K))^{1 - (e(\log 2)/2) - \epsilon}} \ll \frac{\log K}{(\log(x/2^K))^{1 - (e(\log 2)/2) - \epsilon}},\]
\[\sum_{K < i < \log x/\log 2} \frac{1}{i(\log(x/2^i))^{1 - (e(\log 2)/2) - \epsilon}} \ll \sum_{i < \log x/\log 2} \frac{1}{K} \ll \frac{\log x}{K}.\]
Setting the two sums equal to each other suggests choosing $K = (\log x)^{e(\log 2)/2}$. This yields
\[\sum_{\substack{v < x/2 \\ S(D - 4av) \notin \mathcal{D}}} \sum_t \# M_{v, t} \ll \frac{x}{(\log x)^{1 - e(\log 2)/2 - \epsilon}}.\]

Suppose $S(D - 4av) \in \mathcal{D}$. Let $U$ be a function of $x$ chosen with $U = O(x^\epsilon)$ for all $\epsilon$. Suppose $v \leq U$. We want to bound
\[(\log \log x)^3 \sideset{}{'}\sum_{v \leq U} \frac{\rho(v)}{v\log(x/v)} \prod_{q < x/v} \left(1 - \frac{1}{q} \left(\frac{D - 4av}{q}\right)\right).\]
By Lemma $3.2$, the product above is $O(v^\epsilon)$ for any $\epsilon > 0$. In addition, $\log(x/v) \gg \log x$ because $v \leq U$. We already established that $\rho(v) \ll v^\epsilon$. Putting this together, we have
\[\sideset{}{'}\sum_{v \leq U} \frac{\rho(v)}{v\log(x/v)} \prod_{q < x/v} \left(1 - \frac{1}{q} \left(\frac{D - 4av}{q}\right)\right) \ll \sum_{v < U} \frac{1}{v^{1 - 2\epsilon} \log x} \ll \frac{U^{2\epsilon}}{\log x}.\]

Now, we consider the case where $S(D - 4av) \in \mathcal{D}$ and $U < v < x/2$. We have
\[\sideset{}{'}\sum_{\substack{U < v < x/2 \\ S(D - 4av) \in \mathcal{D}}} \frac{\rho(v)}{v\log(x/v)} \prod_{q < x/v} \left(1 - \frac{1}{q} \left(\frac{D - 4av}{q}\right)\right) \ll \sum_{\substack{U < v < x/2 \\ S(D - 4av) \in \mathcal{D}}} \frac{1}{v^{1 - 2\epsilon} \log(x/v)}.\]
Because $v < x/2$, $\log(x/v) \gg 1$. At this point, we use dyadic intervals:
\begin{eqnarray*}
\sum_{\substack{U < v < x/2 \\ S(D - 4av) \in \mathcal{D}}} \frac{1}{v^{1 - 2\epsilon}} & \ll & \sum_i \sum_{\substack{2^i U < v < 2^{i + 1} U \\ S(D - 4av) \in \mathcal{D}}} \frac{1}{v^{1 - 2\epsilon}} \\
& \ll & \frac{1}{U^{1 - 2\epsilon}} \sum_i \sum_{\substack{v < 2^{i + 1} U \\ S(D - 4av) \in \mathcal{D}}} \frac{1}{2^{(1 - 2\epsilon) i}} \\
& \ll & \frac{1}{U^{1 - 2\epsilon}} \sum_i \frac{(2^{i + 1} U)^{(1/2) + 2\epsilon}}{2^{(1 - 2\epsilon)i}} \\
& \ll & \frac{1}{U^{(1/2) - 4\epsilon}} \sum_i \frac{1}{2^{((1/2) - 2\epsilon)i}} \\
& \ll & \frac{1}{U^{(1/2) - 4\epsilon}}.
\end{eqnarray*}

We add our sums for $v < U$ and $v \geq U$ together:
\[\sideset{}{'}\sum_{\substack{v < x/2 \\ S(D - 4av) \in \mathcal{D}}} \frac{\rho(v)}{v \log(x/v)} \prod_{q < x/v} \left(1 - \frac{1}{q} \left(\frac{D - 4av}{q}\right)\right) \ll \frac{U^{2\epsilon}}{\log x} + \frac{1}{U^{(1/2) - 4\epsilon}}.\]

We choose $U$ so that
\[\frac{1}{\log x} = \frac{1}{U^{1/2}}.\]
Thus,
\[U = (\log x)^2\]
and
\begin{eqnarray*}
\sideset{}{'}\sum_{\substack{v < x/2 \\ S(D - 4av) \in \mathcal{D}}} \frac{\rho(v) (\log \log v)^3}{v \log(x/v)} \prod_{q < x/v} \left(1 - \frac{1}{q} \left(\frac{D - 4av}{q}\right)\right) & \ll & \frac{1}{(\log x)^{1 - 4\epsilon}} + \frac{1}{(\log x)^{1 - 8\epsilon}} \\
& \sim & \frac{1}{(\log x)^{1 - 8\epsilon}}.
\end{eqnarray*}

We have obtained the following bound:
\[V_1 = O\left(\frac{x}{(\log x)^{1 - e(\log 2)/2 - \epsilon}} + \frac{x(\log \log x)^3}{(\log x)^{1 - 8\epsilon}}\right) = O\left(\frac{x}{(\log x)^{1 - e(\log 2)/2 - \epsilon}}\right).\]
\end{proof}

\section{A factor of the form $p - 1$ in the interval $(T, 4ax)$ I}

In the next two sections, we assume that $T < p \leq 4ax$. In addition, fix a number $A \in (1/2, 1)$. Let $\Omega_T (y)$ be the number of (not necessarily distinct) prime factors of $y$ that are smaller than $T$. We define $V_2$ as the number of $n \leq x$ for which $T < p < 4ax$ and $\Omega_T (p - 1) < A \log \log T$.

\begin{thm} For all $A \in (1/2, 1)$, we have
\[V_2 = O\left(\frac{x}{(\log T)^{A\log A - A + 1}}\right).\]
\end{thm}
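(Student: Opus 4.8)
The plan is to bound $V_2$ by a sum over the admissible values of the distinguished prime $p$. The relation $p-1 \mid P(n)$ confines $n$ to $\rho(p-1)$ residue classes modulo $p-1$, and the hypothesis $\Omega_T(p-1) < A\log\log T$ — which says that $p-1$ has unusually few prime factors below $T$ — will be converted into a saving of a power of $\log T$ by feeding a Rankin-type damping factor into the Brun--Titchmarsh-type bound of Theorem~3.3. Concretely, if $n \le x$ is counted by $V_2$ and $q := p$ is its associated prime, then $q$ is prime, $T < q \le 4ax$, $\Omega_T(q-1) < A\log\log T$, and $q-1 \mid \varphi(m) = P(n)$; hence $n$ lies in one of the $\rho(q-1)$ residue classes modulo $q-1$ on which $P$ vanishes, giving at most $\rho(q-1)\bigl(x/(q-1)+1\bigr)$ choices of $n$. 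Summing over the admissible primes $q$ and splitting at $q=x$ (where $x/(q-1)+1 \ll x/(q-1)$ for $q \le x$ and $\ll 1$ for $q > x$),
\[V_2 \ll x\sum_{\substack{T < q \le x \\ \Omega_T(q-1) < A\log\log T}}\frac{\rho(q-1)}{q-1} \;+\; \sum_{\substack{x < q \le 4ax \\ \Omega_T(q-1) < A\log\log T}}\rho(q-1),\]
with $q$ running over primes throughout.

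\textbf{The damping estimate.} Fix $z \in (0,1)$ and put $g(m) = \rho(m)\,z^{\Omega_T(m)}$, a non-negative multiplicative function with $g(m) \le \rho(m)$; the bounds $\rho(p^r) \le C$ and $\rho(m) \ll_\epsilon m^\epsilon$ established in Section~3 then show that $g$ satisfies hypotheses (1) and (2) of Theorem~3.3. Since $z < 1$ we have $\rho(q-1) \le z^{-A\log\log T} g(q-1)$ whenever $\Omega_T(q-1) < A\log\log T$, so it suffices to estimate $G(y) := \sum_{q \le y} g(q-1)$. By Theorem~3.3(b),
\[G(y) \ll \frac{y}{\log y}\exp\!\left(\sum_{q \le y}\frac{g(q)-1}{q}\right).\]
The key point is that $\rho(q) = 2$ on the primes with $\bigl(\tfrac{D}{q}\bigr)=1$ and $\rho(q)=0$ on those with $\bigl(\tfrac{D}{q}\bigr)=-1$, each class having density $\tfrac12$ by Section~2 (and $\sum_q \bigl(\tfrac{D}{q}\bigr)/q$ converges), so $\sum_{q \le Y}\rho(q)/q = \log\log Y + O(1)$ and hence $\sum_{T \le q \le y}(\rho(q)-1)/q = O(1)$ uniformly in $y$. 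Splitting the exponent at $T$ and invoking Mertens,
\[\sum_{q \le y}\frac{g(q)-1}{q} = \sum_{q < T}\frac{z\rho(q)-1}{q} + \sum_{T \le q \le y}\frac{\rho(q)-1}{q} = (z-1)\log\log T + O(1),\]
so $G(y) \ll (y/\log y)(\log T)^{z-1}$, and therefore
\[N(y) := \sum_{\substack{q \le y \\ \Omega_T(q-1) < A\log\log T}}\rho(q-1) \ll \frac{y}{\log y}\,(\log T)^{\,z - 1 - A\log z}.\]
Minimising $z - 1 - A\log z$ over $(0,1)$ at $z = A$ (permissible since $A \in (1/2,1) \subset (0,1)$) gives $N(y) \ll \dfrac{y}{(\log y)(\log T)^{A\log A - A + 1}}$.

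\textbf{Conclusion.} The second sum in the display of the first paragraph is $\le N(4ax) \ll \dfrac{x}{(\log x)(\log T)^{A\log A - A + 1}}$. For the first sum, partial summation against $N$ gives $\sum \rho(q-1)/(q-1) \ll N(x)/x + \int_T^x N(t)\,t^{-2}\,dt$, where the integral is $\ll (\log T)^{-(A\log A - A + 1)}\int_T^x \tfrac{dt}{t\log t} \ll (\log T)^{-(A\log A - A + 1)}$ once $T$ is at least a fixed power of $x$ (so that $\log\log x - \log\log T = O(1)$; in the eventual optimisation $T$ will be so chosen). Multiplying by $x$ and combining the two pieces yields $V_2 = O\bigl(x/(\log T)^{A\log A - A + 1}\bigr)$.

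\textbf{Main obstacle.} The heart of the argument is the estimate $G(y) \ll (y/\log y)(\log T)^{z-1}$: one must both check that $g = \rho\, z^{\Omega_T}$ is admissible for Theorem~3.3 and, more essentially, produce the cancellation $\sum_{T \le q \le y}(\rho(q)-1)/q = O(1)$, which rests on the fact that $\rho(q)$ averages $2/q$. This is exactly why part~(b) of Theorem~3.3, with its ``$f(p)-1$'' in the exponent, is the correct input rather than part~(a); that the optimal damping parameter turns out to equal the given constant $A$, reproducing precisely the exponent $A\log A - A + 1$, is then a pleasant coincidence of notation.
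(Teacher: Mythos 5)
Your argument is essentially the paper's: the same reduction to $\sum_p \rho(p-1)/(p-1)$ over the restricted primes, the same Rankin-type damping $z^{\Omega_T(p-1)}$ (the paper's $B^{\Omega_T(p-1)}$) fed into Theorem~3.3(b), and the same optimization at $z=A$; your use of partial summation where the paper uses dyadic blocks is an immaterial difference. One small slip: the eventual choice is $T=\exp(((1-\delta)/A)\log x/\log\log x)$, \emph{not} a fixed power of $x$, so your integral contributes a factor $\log\log x-\log\log T\ll\log\log\log x$ rather than $O(1)$ --- but this is harmless (absorbable into $(\log T)^{\delta}$), and the paper itself silently drops an analogous $\log\log x$ factor at the same point.
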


\begin{proof} Given $p$, we can bound the number of $n \leq x$ for which $p - 1$ divides $P(n)$. The number of $n \leq x$ for which $p - 1 | P(n)$ is
\[\frac{x\rho(p - 1)}{p - 1} + O(\rho(p - 1)).\]
In order to bound the number of possible $n$ for any given $p$ satisfying the conditions above, we sum over all possible $p$. We obtain
\[V_2 \leq \sum_{\substack{T < p < 4ax \\ \Omega_T (p - 1) < A \log \log T}} \left(\frac{x\rho(p - 1)}{p - 1} + O(\rho(p - 1))\right).\]

We have $\rho(p - 1) < (1/(4a)) x\rho(p - 1)/(p - 1)$. So, we only need to consider the first term of the sum in order to bound the order of magnitude:
\[V_2 \ll x\sum_{\substack{T < p < 4ax \\ \Omega_T (p - 1) < A \log \log T}} \frac{\rho(p - 1)}{p - 1}.\]

Fix a constant $B < 1$. Because $\Omega_T (p - 1) < A \log \log T$,
\[B^{\Omega_T (p - 1)} > B^{A \log \log T} = (\log T)^{A \log B}.\]
For each prime $p$ in our sum,
\[\frac{B^{\Omega_T (p - 1)}}{(\log T)^{A \log B}} > 1.\]
Multiplying every term in our sum by this quantity will increase the sum. Hence,
\begin{eqnarray*}
\sum_{\substack{T < p < 4ax \\ \Omega_T (p - 1) < A \log \log T}} \frac{\rho(p - 1)}{p - 1}	& \leq & \sum_{\substack{T < p < 4ax \\ \Omega_T (p - 1) < A \log \log T}} \frac{\rho(p - 1)}{p - 1} \left(\frac{B^{\Omega_T (p - 1)}}{(\log T)^{A \log B}}\right) \\
& \leq & \frac{1}{(\log T)^{A \log B}} \sum_{T < p < 4ax} \frac{B^{\Omega_T (p - 1)} \rho(p - 1)}{p - 1}.
\end{eqnarray*}
Let $k = \log 2$. In order to evaluate this sum, we break it into dyadic intervals:
\begin{eqnarray*}
\sum_{T < p < 4ax} \frac{B^{\Omega_T (p - 1)} \rho(p - 1)}{p - 1} & \leq & \sum_{0 \leq i < k\log(4ax/T) + 1} \sum_{2^i T \leq p < 2^{i + 1} T} \frac{B^{\Omega_T (p - 1)} \rho(p - 1)}{p - 1} \\
& \ll & \sum_{0 \leq i < k\log(4ax/T) + 1} \frac{1}{2^i T} \sum_{2^i T \leq p < 2^{i + 1} T} B^{\Omega_T (p - 1)} \rho(p - 1).
\end{eqnarray*}
By Theorem $3.3$,
\begin{eqnarray*}
\sum_{2^i T \leq p < 2^{i + 1} T} B^{\Omega_T (p - 1)} \rho(p - 1) & \ll & \frac{2^{i + 1} T}{\log(2^{i + 1} T)} \exp\left(\sum_{p < 2^{i + 1} T} \frac{B^{\Omega_T (p)} \rho(p)}{p} - \sum_{p < 2^i T} \frac{1}{p}\right) \\
& \ll & \frac{2^i T}{\log(2^i T)} \exp\left(\sum_{p | 2aD} \frac{B}{p} + \sum_{\substack{p \leq T \\ \left(\frac{D}{p}\right) = 1}} \frac{2B}{p} + \sum_{\substack{T < p < 2^{i + 1} T \\ \left(\frac{D}{p}\right) = 1}} \frac{2}{p} - \sum_{p < 2^i T} \frac{1}{p}\right) \\
& \ll & \frac{2^i T}{\log(2^i)} \exp(\log \log (2^{i + 1} T) - \log \log (2^i T) - (1 - B) \log \log T) \\
& \ll & \frac{2^i T}{i} \exp(-(1 - B) \log \log T) \\
& \sim & \frac{2^i T}{i(\log T)^{1 - B}}. \\
\end{eqnarray*}
Hence,
\[\sum_{T < p < 4ax} \frac{B^{\Omega_T (p - 1)} \rho(p - 1)}{p - 1} \ll \sum_{i < k\log(4ax/T) + 1} \frac{1}{i(\log T)^{1 - B}} \ll \frac{\log \log x}{(\log T)^{1 - B}}.\]

Putting all this together shows us that
\[V_2 \ll \frac{x}{(\log T)^{A\log B - B + 1}}.\]
We fix $A$ and let $B = A$ to make $A\log B - B + 1$ as large as possible. Hence,
\[V_2 = O\left(\frac{x}{(\log T)^{A\log A - A + 1}}\right).\]
\end{proof}

Note that $A\log A - A + 1$ is positive for all $A \in (1/2, 1)$.

\section{A factor of the form $p - 1$ in the interval $(T, 4ax)$ II}

Let $V_3$ be the number of $n \leq x$ for which $T < p \leq 4ax$ and $\Omega_T (p - 1) > A \log \log T$.

\begin{thm} We have
\[V_3 = O\left(\frac{x}{(\log T)^{(A + (1/2))\log(A + (1/2)) - A + (1/2)}}\right).\]
\end{thm}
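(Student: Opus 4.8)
The plan is to deduce the bound from two large–deviation estimates for the number of prime factors below $T$ of quadratic polynomial values. Keeping the notation of the Outline, write $P(n)=(p-1)v$; for $n$ counted by $V_3$ we have $T<p\le 4ax$ and $\Omega_T(p-1)>A\log\log T$, and since $\Omega_T$ is completely additive, $\Omega_T(P(n))=\Omega_T(p-1)+\Omega_T(v)\ge\Omega_T(p-1)>A\log\log T$. I would split $V_3\le V_3'+V_3''$, where $V_3'$ is the number of $n\le x$ with $\Omega_T(P(n))>(A+\tfrac12)\log\log T$ and $V_3''$ is the number of $n$ counted by $V_3$ with $\Omega_T(P(n))\le(A+\tfrac12)\log\log T$; for the latter, $\Omega_T(v)=\Omega_T(P(n))-\Omega_T(p-1)\le\tfrac12\log\log T$. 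Write $\kappa(\beta)=\beta\log\beta-\beta+1$; then $\kappa(A+\tfrac12)$ is exactly the exponent in the statement, $\kappa$ is positive for $\beta>1$, and $\kappa$ is decreasing on $(0,1)$, so $\kappa(\tfrac12)=\tfrac12-\tfrac12\log 2>\kappa(A+\tfrac12)$ for every $A\in(\tfrac12,1)$. The point of the threshold $A+\tfrac12$ is that, by Mertens' theorem and the Chebotarev input from the Outline, $\sum_{q<T}\rho(q)/q=2\sum_{q<T,\,(D/q)=1}1/q+O(1)\sim\log\log T$, so the typical value of both $\Omega_T(P(n))$ and $\Omega_T(v)$ is $\log\log T$; thus $V_3'$ is governed by an upper deviation of relative size $A+\tfrac12>1$ and $V_3''$ by a lower deviation of relative size $\tfrac12<1$.

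For $V_3'$ I would use Rankin's trick: for any $y>1$,
\[V_3'\le y^{-(A+1/2)\log\log T}\sum_{n\le x}y^{\Omega_T(P(n))}.\]
A standard sieve estimate — writing $y^{\Omega_T(m)}$ as a Dirichlet convolution supported on $T$-smooth integers and bounding $\#\{n\le x: d\mid P(n)\}$ by $x\rho(d)/d+O(\rho(d))$, or equivalently applying Theorem $3.3$(a) to the $T$-smooth part of $P(n)$ — gives $\sum_{n\le x}y^{\Omega_T(P(n))}\ll x\exp\!\big((y-1)\sum_{q<T}\rho(q)/q\big)\ll x(\log T)^{y-1}$. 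Hence $V_3'\ll x(\log T)^{y-1-(A+1/2)\log y}$, and the choice $y=A+\tfrac12$ minimizes the exponent to $-\kappa(A+\tfrac12)$, so $V_3'\ll x/(\log T)^{\kappa(A+1/2)}$.

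For $V_3''$ I would fix $p$ and a residue $t$ among the $\rho(p-1)$ roots of $P\bmod(p-1)$, and put $n=(p-1)u+t$. Then $v=P(n)/(p-1)=\widetilde R(u)$ with $\widetilde R(u)=a(p-1)u^2+(2at+b)u+\tfrac{P(t)}{p-1}$, whose discriminant is exactly $D$; since $D$ is not a perfect square $\widetilde R$ is irreducible, so — in contrast with the polynomial $R$ in the proof of Theorem $3.1$ — no exceptional reducible cases arise. The constraint $\Omega_T(v)\le\tfrac12\log\log T$ is a lower deviation for $\Omega_T(\widetilde R(u))$, whose mean is again $\sim\log\log T$, so by Rankin's trick with the weight $(1/2)^{\Omega_T(\widetilde R(u))}$ together with the same sieve bound, $\#\{u\le x/(p-1): \Omega_T(\widetilde R(u))\le\tfrac12\log\log T\}\ll \tfrac{x}{p-1}(\log T)^{(\log 2)/2-1/2}=\tfrac{x}{p-1}/(\log T)^{\kappa(1/2)}$. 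Summing over the $\rho(p-1)$ choices of $t$ and then over $p$,
\[V_3''\ll \frac{x}{(\log T)^{\kappa(1/2)}}\sum_{T<p\le 4ax}\frac{\rho(p-1)}{p-1}\ll \frac{x\log\!\big((\log x)/\log T\big)}{(\log T)^{\kappa(1/2)}},\]
where the last sum is evaluated by dyadic splitting and Theorem $3.3$(b), using $\sum_{p<y}(\rho(p)-1)/p=O(1)$. Since $\kappa(\tfrac12)>\kappa(A+\tfrac12)$, this is absorbed into the $V_3'$ bound, giving $V_3=O\big(x/(\log T)^{\kappa(A+1/2)}\big)$ as claimed.

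The main obstacle is supplying the two Brun–Titchmarsh–type bounds $\sum_{n\le x}y^{\Omega_T(P(n))}\ll x(\log T)^{y-1}$ and its analogue for $\widetilde R(u)$ with enough uniformity: one must control the truncation error when expanding $y^{\Omega_T}$ over $T$-smooth divisors, which imposes a mild upper restriction on $T$ (relative to $x$, respectively to $x/(p-1)$) and forces a separate, easy treatment of the primes $p$ near $4ax$ — there $x/(p-1)$ is too small for the sieve, but the trivial estimate $\sum_{x^{1-\delta}<p\le 4ax}\rho(p-1)=O(x/\log x)$ already suffices. One must also keep the implied constant in the $\widetilde R$-estimate independent of $p$ and $t$ (it enters only through the fixed discriminant $D$). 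Once these inputs are in place, the optimization of $y$ and the comparison $\kappa(\tfrac12)>\kappa(A+\tfrac12)$ are routine.
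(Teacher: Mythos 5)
Your decomposition is genuinely different from the paper's. The paper never isolates the event $\Omega_T(v)\le\tfrac12\log\log T$; instead it argues structurally on $m$: either two primes $q$ with $\Omega_T(q-1)>A\log\log T$ have $(q-1)\mid P(n)$, forcing $\Omega_T(P(n))>2A\log\log T$, or the cofactor $r$ of $m$ is $T$-smooth, forcing $P(n)<4axT^{A\log\log T}=o(x^2)$ and hence $n=o(x)$. The first branch is then handled not by a direct Rankin bound on $\Omega_T(P(n))$ but by Lemma $5.2$ (Rankin applied to $\omega_T(P(n))$, where Nair's hypotheses hold) combined with Theorem $5.3$ on square divisors of $P(n)$ to convert $\omega_T$ into $\Omega_T$; that conversion is what degrades the threshold $2A$ to the exponent $\kappa(A+\tfrac12)$ that you reach directly. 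Your observation that the cofactor polynomial $\widetilde R$ has discriminant exactly $D$ (so no reducible exceptional cases arise, unlike for $R$ in Theorem $3.1$) is correct and is a nice feature of your second branch.

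However, two of the steps you defer are not routine, and the first is precisely the difficulty the paper's architecture is built to avoid. The estimate $\sum_{n\le x}y^{\Omega_T(P(n))}\ll x(\log T)^{y-1}$ with $y=A+\tfrac12>1$ does \emph{not} follow from Theorem $3.3$ or from Nair's theorem as quoted: the function $f(n)=y^{\Omega(n)}$ violates hypothesis $(2)$, since $f(2^k)=(2^k)^{\log y/\log 2}$ is not $O(n^\epsilon)$. Making this sum legitimate for $1<y<2$ requires controlling the contribution of large prime-power divisors of $P(n)$, which is essentially the content of Theorem $5.3$; so you have not dispensed with the square-divisor lemma, only relabelled it as a ``truncation error.'' Second, your fallback for $p$ near $4ax$ fails as stated: for $p\in(x^{1-\delta},4ax]$ the number of $n\le x$ with $(p-1)\mid P(n)$ is $\ll x^{\delta}\rho(p-1)$, not $O(\rho(p-1))$, and the main terms alone contribute $x\sum_{x^{1-\delta}<p\le 4ax}\rho(p-1)/(p-1)\asymp\delta x$, with no power-of-$\log T$ saving for fixed $\delta$; the condition $\Omega_T(p-1)>A\log\log T$ gives no extra saving here since $A<1$ puts the threshold below the mean. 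Repairing this forces $\delta=\delta(x)\to 0$, and then the sieve bound for $\widetilde R$ must hold uniformly on ranges $u\le x^{\delta(x)}$ for a polynomial whose leading coefficient is $\asymp x$ — a uniformity that the cited Shiu/Nair results do not supply off the shelf. Until these two points are supplied, the argument has genuine gaps; with them supplied, it would give the stated bound (indeed without the $\delta$-loss that Theorem $5.3$ introduces into the paper's own exponent).
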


To prove this theorem, we must show two preliminary results. Suppose $P(n) = (p - 1)(q - 1)v$ with $p, q > T$ and $\Omega_T (p - 1), \Omega_T (q - 1) > A \log \log T$. Then, $\Omega_T (P(n)) > 2A \log \log T$. We bound the number of such $n$ with the following results.
\begin{lemma} For all $\epsilon > 0$, the number of $n \leq x$ for which $\omega_T (P(n)) > (1 + \epsilon) \log \log T$ is
\[O\left(\frac{x}{(\log T)^{(1 + \epsilon) \log(1 + \epsilon) - \epsilon}}\right).\]
\end{lemma}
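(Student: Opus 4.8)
The plan is a large‑deviation (Rankin) argument that reduces the statement to an exponential‑moment bound for $\omega_T(P(n))$. Fix a parameter $w>1$. Whenever $\omega_T(P(n))>(1+\epsilon)\log\log T$ one has $w^{\omega_T(P(n))}>w^{(1+\epsilon)\log\log T}$, so Markov's inequality gives
\[\#\{n\le x:\omega_T(P(n))>(1+\epsilon)\log\log T\}\ \le\ w^{-(1+\epsilon)\log\log T}\sum_{n\le x}w^{\omega_T(P(n))}.\]
It therefore suffices to establish the moment bound $\sum_{n\le x}w^{\omega_T(P(n))}\ll_{w}x\,(\log T)^{w-1}$. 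Granting this, the right‑hand side above is $\ll x(\log T)^{(w-1)-(1+\epsilon)\log w}$; the exponent $(w-1)-(1+\epsilon)\log w$ is minimized at $w=1+\epsilon$, where it equals $\epsilon-(1+\epsilon)\log(1+\epsilon)$, so the lemma follows upon taking $w=1+\epsilon$.

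For the moment bound I would apply the analogue of Theorem 3.3 for multiplicative functions of the quadratic argument $P(n)$ (an estimate of Shiu/Nair type) to the multiplicative function $g=w^{\omega_T}$, which satisfies $g(q^{r})=w$ for $q<T$ and $g(q^{r})=1$ for $q\ge T$, so that conditions (1)--(2) of Theorem 3.3 hold with $A_{1}=w$ and $g(n)\le w^{\omega(n)}\ll_{\delta}n^{\delta}$. This yields
\[\sum_{n\le x}g(P(n))\ \ll\ \frac{x}{\log x}\exp\!\Bigl(\sum_{q\le x}\frac{g(q)\rho(q)}{q}\Bigr)
=\frac{x}{\log x}\exp\!\Bigl(w\!\sum_{q<T}\frac{\rho(q)}{q}+\sum_{T\le q\le x}\frac{\rho(q)}{q}\Bigr).\]
By Mertens' theorem together with the Section~2 density statement (the primes with $\left(\tfrac{D}{q}\right)=1$ have density $\tfrac12$, with the attendant convergence of $\sum_{q}\left(\tfrac{D}{q}\right)/q$), we get $\sum_{q<T}\rho(q)/q=\log\log T+O(1)$ and $\sum_{T\le q\le x}\rho(q)/q=\log\log x-\log\log T+O(1)$, so the exponential is $\ll(\log T)^{w-1}\log x$ and the moment bound follows.

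I expect the genuine difficulty to lie in the moment bound itself, i.e.\ in the estimate for $g(P(n))$: expanding $w^{\omega_T(P(n))}$ over the squarefree divisors $d\mid P(n)$ forces $d$ to range up to $\asymp x^{2}$, and a direct application of Brun's or Selberg's sieve loses efficiency precisely for those $n$ whose $P(n)$ has an atypically large smooth part (the regime where $x/d$ is no longer a large power of the sieving level). Resolving this is exactly the smooth/rough‑part organization underlying Shiu's and Nair's theorems — split $P(n)$ into its $z$‑smooth and $z$‑rough parts for a well‑chosen $z$, handle the rough part by the fundamental lemma of sieve theory and the smooth part by smooth‑number bounds — so the cleanest route is to invoke the polynomial version of Theorem 3.3. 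Everything else (Rankin's inequality, Mertens' theorem, the Chebotarev density‑$\tfrac12$ statement of Section~2, and the elementary optimization $\min_{w>1}\bigl[(w-1)-(1+\epsilon)\log w\bigr]$) is routine.
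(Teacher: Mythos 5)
Your proposal is correct and follows essentially the same route as the paper: a Rankin/Chebyshev argument applied to the moment $\sum_{n\le x}z^{\omega_T(P(n))}$, with the moment bound supplied by Nair's theorem on multiplicative functions of polynomial values (which the paper cites directly, just as you propose invoking the ``polynomial version of Theorem 3.3''), followed by the optimization $z=1+\epsilon$.
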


\begin{proof} Fix $z > 1$. We bound the sum of $z^{\omega_T (P(n))}$. Note that $f(n) = z^{\omega_T (n)}$ is a non-negative multiplicative function. In addition, $f(p^\ell)$ is $1$ or $z$ for all $p$ and $\ell$. We can also show that $f(n) \ll n^\epsilon$ for all $\epsilon > 0$: 
\[f(n) = z^{\omega_T (n)} \ll z^{\omega (n)} \ll z^{\log n/\log \log n} = n^{\log z/\log \log n} \ll n^\epsilon.\]

By a result of Nair \cite{Nai},
\[\sum_{n \leq x} z^{\omega_T (P(n))} \ll x \prod_{q \leq x} \left(1 - \frac{\rho(q)}{q}\right)\exp\left(\sum_{q \leq x} \frac{z^{\omega_T (q)} \rho(q)}{q}\right).\]
We have
\begin{eqnarray*}
\sum_{n \leq x} z^{\omega_T (P(n))} & \ll & x \prod_{\substack{q \leq x \\ \left(\frac{D}{q}\right) = 1}} \left(1 - \frac{2}{q}\right) \exp\left(\sum_{q | 2aD} \frac{z}{q} + \sum_{\substack{q < T \\ \left(\frac{D}{q}\right) = 1}} \frac{2z}{q} + \sum_{\substack{T < q \leq x \\ \left(\frac{D}{q}\right) = 1}} \frac{2}{q}\right) \\
& \ll & x\left(\frac{1}{\log x}\right)\exp(\log \log x + (z - 1)\log \log T)) \\
& = & x(\log T)^{z - 1}.
\end{eqnarray*}

Let $M$ be the number of $n \leq x$ for which $\omega_T (P(n)) > (1 + \epsilon) \log \log T$. Then,
\[\sum_{n \leq x} z^{\omega_T (P(n))} \geq z^{(1 + \epsilon) \log \log T} M = (\log T)^{(1 + \epsilon) \log z} M.\]
Combining our two bounds gives us
\[M \ll x(\log T)^{z - (1 + \epsilon)(\log z) - 1}\]
We can choose $z$ to minimize the exponent. At the minimum, $z = 1 + \epsilon$, giving us
\[M \ll \frac{x}{(\log T)^{(1 + \epsilon)\log(1 + \epsilon) - \epsilon}}.\]
\end{proof}

\begin{thm} For all $C, \delta > 0$, the number of $n \leq x$ for which $P(n)$ has a square divisor greater than $(\log T)^C$ is
\[O\left(\frac{x}{(\log T)^{(1 - \delta)C/2}}\right).\]
\end{thm}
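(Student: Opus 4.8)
Write $z = (\log T)^{C/2}$, so that a square divisor of $P(n)$ larger than $(\log T)^C$ is precisely an integer $d > z$ with $d^2 \mid P(n)$. The plan is to reduce to two tractable cases by an elementary observation: if $d > z$ and $d^2 \mid P(n)$, then either $d$ has a prime factor $q > z$, in which case $q^2 \mid P(n)$; or every prime factor of $d$ is at most $z$, in which case the least divisor $e$ of $d$ exceeding $z$ satisfies $e \le p\cdot z \le z^2 = (\log T)^C$ for any prime $p \mid e$, and still $e^2 \mid P(n)$. Hence the count we want is at most $N_1 + N_2$, where
\begin{align*}
N_1 &= \#\{n \le x : e^2 \mid P(n) \text{ for some } z < e \le (\log T)^C\}, \\
N_2 &= \#\{n \le x : q^2 \mid P(n) \text{ for some prime } q > z\}.
\end{align*}

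To bound $N_1$ I would sum the trivial estimate $\#\{n \le x : e^2 \mid P(n)\} \le \rho(e^2)\,(x/e^2 + 1)$ over the relevant $e$. Since $e^2 \le (\log T)^{2C} = o(x)$, this is $\ll \rho(e^2)\,x/e^2$, and using the bound $\rho(m) \ll_\epsilon m^\epsilon$ established earlier with $\epsilon = \delta/2$ (so that $\rho(e^2) \ll e^{\delta}$) gives
\[
N_1 \ll x \sum_{e > z} \frac{\rho(e^2)}{e^2} \ll x \sum_{e > z} e^{\delta - 2} \ll x\, z^{\delta - 1} = \frac{x}{(\log T)^{(1 - \delta)C/2}},
\]
exactly the required bound.

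For $N_2$, note first that $q^2 \mid P(n)$ with $n \le x$ forces $q^2 \le P(n) \le 2ax^2$, so $q \le \sqrt{2a}\,x$. For all but finitely many $q$ we have $\rho(q^2) = \rho(q) \le 2$ (the roots of $P$ modulo $q$ are simple and lift uniquely modulo $q^2$); the finitely many $q \mid 2aD$ contribute only $O(x/z^2)$ and may be ignored. I would split the remaining primes at $\sqrt{x}$. For $z < q \le \sqrt{x}$ one has $q^2 \le x$, so $\#\{n \le x : q^2 \mid P(n)\} \le 2(x/q^2 + 1) \ll x/q^2$, and summing over $q > z$ yields $\ll x/z$, which is admissible. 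For $\sqrt{x} < q \le \sqrt{2a}\,x$ one has $q^2 > x$, so each of the at most two residue classes modulo $q^2$ containing a root of $P$ meets $[1,x]$ in at most one integer, whence the contribution is at most $2\bigl(\pi(\sqrt{2a}\,x) - \pi(\sqrt{x})\bigr) \ll x/\log x$. Since $T < 4ax$ forces $\log T \ll \log x$, this last term is again $O\bigl(x/(\log T)^{(1-\delta)C/2}\bigr)$ in the range of $C$ relevant to us (in our application $C < 1$), and adding the three estimates proves the theorem.

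The step I expect to be the real obstacle is the high range $\sqrt{x} < q \le \sqrt{2a}\,x$: here the sieve produces no saving, and the best one can say is that there are $\ll x/\log x$ such primes, each responsible for $O(1)$ values of $n$. In the intended use the large square factor of $P(n)$ comes from the excess $\Omega_T(P(n)) - \omega_T(P(n))$ and so is supported on primes below $T$; this makes $q < T \le \sqrt{x}$, kills the high range, and would give the clean bound for all $C$, so one might prefer to build $T$-smoothness of the square divisor into the statement. The high range is also why the reduction to prime-square divisors is done first: bounding $\sum_{z < d \le \sqrt{2a}\,x} \rho(d^2)$ directly would lose a full factor of $\log x$ from the $\asymp x$ composite values of $d$ in that interval, whereas passing to primes costs only $O(x/\log x)$.
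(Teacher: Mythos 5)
Your proof is correct, and it reaches the two key estimates of the paper's argument --- a sum of $x\rho(e^2)/e^2$ over moderate square divisors, and a $\pi(x)\ll x/\log x$ count for prime-square divisors with $q^2>x$ --- by a genuinely different and somewhat cleaner decomposition. The paper splits on the size of the square root $r$ of the square divisor: for $r^2\le x^{2-\epsilon}$ it sums $x\rho(r^2)/r^2+O(\rho(r^2))$ directly over the whole range, which forces the bookkeeping condition $\epsilon>2\delta$ to control the error term $\sum\rho(r^2)\ll x^{1+\delta-\epsilon/2}$; for $r^2>x^{2-\epsilon}$ it argues that $r$ either has a divisor in $((\log T)^{C/2},x^{1-\epsilon/2}]$ (reducing to the first case) or a very large prime factor $p$ with $p^2\mid P(n)$. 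Your split --- either the square divisor has a prime factor $q>z$, or its least divisor exceeding $z$ lands in $(z,z^2]$ --- accomplishes the same reduction without introducing $\epsilon$ at all, and confines the composite-divisor sum to $e\le(\log T)^C$, where the $O(\rho(e^2))$ error is trivially absorbed. One shared caveat: both your proof and the paper's leave a residual $O(x/\log x)$ from the primes with $q^2>x$, so the bound as stated for \emph{all} $C>0$ is only literally delivered when $(\log T)^{(1-\delta)C/2}\ll\log x$; you flag this explicitly (and correctly note it is harmless in the application, where $C=\epsilon(\log 2)/2<1$ and $\log T\asymp\log x/\log\log x$), whereas the paper leaves it implicit. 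Your closing observation that restricting to $T$-smooth square divisors would eliminate the high range entirely is a sensible refinement, since in Corollary 5.4 the square factor is indeed built from primes below $T$.
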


\begin{proof} Suppose $r^2 | P(n)$ with $r^2 > (\log T)^C$. Assume $r^2 \leq x^{2 - \epsilon}$ for a fixed $\epsilon > 0$. The number of possible $n \leq x$ is
\[\sum_{r : (\log T)^C < r^2 \leq x^{2 - \epsilon}} \left(\frac{x\rho(r^2)}{r^2} + O(\rho(r^2))\right).\]

For all $\epsilon > 0$, $\rho(r^2) \ll r^\delta$. Therefore,
\[\sum_{(\log T)^C < r^2 \leq x^{2 - \epsilon}} \frac{x\rho(r^2)}{r^2} \ll \sum_{r > (\log T)^{C/2}} \frac{x}{r^{2 - \delta}} \sim \frac{x}{(\log T)^{(1 - \delta)C/2}}\]
and
\[\sum_{(\log T)^C < r^2 \leq x^{2 - \epsilon}} \rho(r^2) \ll \sum_{r \leq x^{1 - (\epsilon/2)}} r^\delta \ll x^{1 + \delta - (\epsilon/2)}.\]
If $\epsilon > 2\delta$, then the second sum is smaller than a constant multiple of the first one.

We may assume that $r^2 > x^{2 - \epsilon}$. If $r$ has a divisor $d \in ((\log T)^{C/2}, x^{1 - (\epsilon/2)}]$, then $P(n)$ has a square divisor in the range $((\log T)^C, x^{2 - \epsilon}]$, which we have already discussed. Suppose otherwise. Let $p$ be a prime factor of $r$. If $p \in (x^{\epsilon/2}, x^{1 - (\epsilon/2)}/(\log T)^{C/2}]$, then $r/p \in ((\log T)^{C/2}, x^{1 - (\epsilon/2)}]$. We may assume that if $p | r$, then $p \leq x^{\epsilon/2}$ or $p > x^{1 - (\epsilon/2)}/(\log T)^{C/2}$. If every prime factor is $\leq x^{\epsilon/2}$, then $r$ has a divisor in the range $((\log T)^{C/2}, x^{1 - (\epsilon/2)}]$. Therefore, the largest prime factor of $r$ is greater than $x^{1 - (\epsilon/2)}/(\log T)^{C/2}$. There exists some prime $p > x^{1 - (\epsilon/2)}/(\log T)^{C/2}$ such that $p^2 | P(n)$. The number of $n$ with this property is
\[\sum_{x^{2 - \epsilon}/(\log T)^C < p^2 \ll x^2} \left(\frac{x\rho(p^2)}{p^2} + O(\rho(p^2))\right).\]
We have already established that the first sum is sufficiently small. In addition,
\[\sum_{x^{2 - \epsilon}/(\log T)^C < p^2 \ll x^2} \rho(p^2) \ll \frac{x}{\log x}.\]
\end{proof}

\begin{cor} For all $\epsilon < 1.75$, the number of $n \leq x$ for which $\Omega_T (P(n)) > (1 + \epsilon) \log \log T$ is
\[O\left(\frac{x}{(\log T)^{(1 + (\epsilon/2))\log(1 + (\epsilon/2)) - (\epsilon/2)}}\right).\]
\end{cor}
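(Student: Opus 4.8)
The plan is to separate the prime factors counted with multiplicity by $\Omega_T$ into the distinct ones and the repeated ones. If $\Omega_T(P(n)) > (1+\epsilon)\log\log T$, then at least one of
\[
\omega_T(P(n)) > \Bigl(1 + \tfrac{\epsilon}{2}\Bigr)\log\log T \qquad\text{or}\qquad \Omega_T(P(n)) - \omega_T(P(n)) > \tfrac{\epsilon}{2}\log\log T
\]
must hold, since otherwise $\Omega_T(P(n)) = \omega_T(P(n)) + \bigl(\Omega_T(P(n)) - \omega_T(P(n))\bigr) \le (1+\epsilon)\log\log T$. The first alternative is disposed of immediately: by Lemma 5.2, applied with $\epsilon/2$ in place of $\epsilon$, the number of $n \le x$ for which it holds is $O\bigl(x/(\log T)^{(1+\epsilon/2)\log(1+\epsilon/2) - \epsilon/2}\bigr)$, which is already the asserted bound. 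So the whole problem reduces to counting the $n$ satisfying the second alternative.

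For the second alternative I would manufacture a large square divisor of $P(n)$. Writing $v_p = v_p(P(n))$ and putting $r = \prod_{p \le T} p^{\lfloor v_p/2 \rfloor}$, one has $r^2 \mid P(n)$, and since $2\lfloor v_p/2 \rfloor \ge v_p - 1$ and each $p \ge 2$,
\[
r^2 = \prod_{p \le T} p^{2\lfloor v_p/2\rfloor} \ \ge\ \prod_{\substack{p \mid P(n) \\ p \le T}} 2^{v_p - 1} \ =\ 2^{\Omega_T(P(n)) - \omega_T(P(n))} \ >\ 2^{(\epsilon/2)\log\log T} \ =\ (\log T)^{(\epsilon/2)\log 2}.
\]
Thus every $n$ under the second alternative is counted among those $n \le x$ for which $P(n)$ has a square divisor exceeding $(\log T)^C$ with $C = (\epsilon/2)\log 2$. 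Applying Theorem 5.3 with this $C$ and any $\delta > 0$ bounds the number of such $n$ by $O\bigl(x/(\log T)^{(1-\delta)C/2}\bigr) = O\bigl(x/(\log T)^{(1-\delta)(\epsilon/4)\log 2}\bigr)$; note that Theorem 5.3 already absorbs all the care needed for square divisors that are large relative to $x$, so nothing more is required here.

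It then remains to check that this exponent dominates the target so that the two contributions can be added: letting $\delta \to 0$, I would verify
\[
\frac{\epsilon}{4}\log 2 \ \ge\ \Bigl(1 + \frac{\epsilon}{2}\Bigr)\log\Bigl(1 + \frac{\epsilon}{2}\Bigr) - \frac{\epsilon}{2}.
\]
Both sides vanish at $\epsilon = 0$, the left side has the strictly larger derivative there, and the difference of the two sides is concave in $\epsilon$, so a short computation shows the inequality persists up to $\epsilon$ of about $1.75$ — this is exactly the source of the hypothesis $\epsilon < 1.75$. For such $\epsilon$ one picks $\delta$ small enough that $(1-\delta)(\epsilon/4)\log 2 \ge (1+\epsilon/2)\log(1+\epsilon/2) - \epsilon/2$, and summing the two cases yields the stated bound. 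The substantive step is the reduction of ``$\Omega_T - \omega_T$ large'' to ``$P(n)$ has a square divisor of size $2^{\Omega_T - \omega_T}$'', with the smallest prime $2$ as base; the only mildly delicate point is the constant chase, and the factor $\tfrac14$ rather than $\tfrac12$ in the exponent (hence the cap near $1.75$) is just Theorem 5.3 turning a square divisor of size $y$ into a saving of $y^{1/2}$.
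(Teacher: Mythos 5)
Your proposal is correct and follows essentially the same route as the paper: split into the cases $\omega_T(P(n)) > (1+\epsilon/2)\log\log T$ (handled by Lemma 5.2) and $\Omega_T(P(n)) - \omega_T(P(n)) > (\epsilon/2)\log\log T$ (which forces a square divisor of $P(n)$ exceeding $(\log T)^{(\epsilon/2)\log 2}$, handled by Theorem 5.3), then check that $(\epsilon/4)\log 2$ exceeds the target exponent precisely for $\epsilon$ up to about $1.75$. Your explicit construction of the square divisor $r = \prod_{p \le T} p^{\lfloor v_p/2\rfloor}$ fills in a step the paper merely asserts, and your analysis of where the two exponents cross matches the paper's threshold.
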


\begin{proof} Let $n \leq x$. If $\Omega_T (P(n)) > (1 + \epsilon) \log \log T$, then there are two possibilities:
\begin{enumerate}
\item{$\omega_T (P(n)) > (1 + (\epsilon/2)) \log \log T$,}
\item{$\Omega_T (P(n)) - \omega_T (P(n)) > (\epsilon/2) \log \log T$.}
\end{enumerate}
By Lemma $5.2$, the number of $n$ satisfying the first condition is
\[O\left(\frac{x}{(\log T)^{(1 + (\epsilon/2)) \log(1 + (\epsilon/2)) - (\epsilon/2)}}\right).\]
Suppose $\Omega_T (P(n)) - \omega_T (P(n)) > (\epsilon/2)\log \log T$. Then, $P(n)$ has a square factor greater than $2^{(\epsilon/2)\log \log T} = (\log T)^{\epsilon(\log 2)/2}$. By the previous theorem, the number of $n$ satisfying the second condition is
\[O\left(\frac{x}{(\log T)^{\epsilon(\log 2)/4}}\right).\]
For all $\epsilon < 1.75$,
\[(1 + (\epsilon/2))\log(1 + (\epsilon/2)) - (\epsilon/2) < \epsilon(\log 2)/4.\]
Therefore, the number of $n \leq x$ for which $\Omega_T (P(n)) > (1 + \epsilon)\log \log T$ is
\[O\left(\frac{x}{(\log T)^{(1 + (\epsilon/2))\log(1 + (\epsilon/2)) - (\epsilon/2)}}\right).\]
\end{proof}

For the rest of the paper, we will let $\epsilon < 1.75$. Suppose there exist $p, q \in (T, 4ax)$ with $\Omega_T (p - 1), \Omega_T (q - 1) > A \log \log T$ and $(p - 1)(q - 1) | P(n)$. Then $\Omega_T (P(n)) > 2A \log \log T > (1 + \epsilon) \log \log T$ for $\epsilon < 2A - 1$, which we have handled with the previous theorem.

The other possibility is that $m = pr$, where $r$ is $T$-smooth and $\Omega_T (\varphi(r)) < A \log \log T$. If $r$ is $T$-smooth, then $v = \varphi(r)$ is $T$-smooth as well. Therefore, $P(n) = (p - 1)v$ with $v$ $T$-smooth. Hence,
\[P(n) = (p - 1)v < 4axT^{A \log \log T}.\]
If $T^{A \log \log T} \ll x^{1 - \delta}$ for some $\delta > 0$, then $P(n) = O(x^{2 - \delta})$, which would imply that $n = O(x^{1 - (\delta/2)})$. We find a value of $T$ for which $T^{A \log \log T}$ is very close to $x^{1 - \delta}$. We have
\[A \log T \log \log T = (1 - \delta) \log x.\]
An approximate solution is
\[T = \exp\left(\frac{1 - \delta}{A}\left(\frac{\log x}{\log \log x}\right)\right).\]
For such $T$ (for all $\delta > 0$),
\[V_3 = O\left(\frac{x}{(\log T)^{(1 + (\epsilon/2)) \log(1 + (\epsilon/2)) - (\epsilon/2)}}\right) = O\left(\frac{x}{(\log T)^{(A + (1/2))\log(A + (1/2)) - A + (1/2) - \delta}}\right).\]
Note that $V_1$ is independent of $T$, whereas $V_1$ and $V_2$ decrease as $T$ increases. In order to let $T$ be as large as possible, we use the formula for $T$ above for the rest of the paper.

\section{The number $p$ is small}

Suppose that if $\varphi(m) = P(n)$, then $m$ is $T$-smooth. We use an argument similar to the one at the end of the previous section to show that the number of such $n$ is negligible. By Theorem $5.3$, we may assume that $\Omega_T (P(n)) < A \log \log T$. In addition, $P(n)$ is $T$-smooth because $m$ is $T$-smooth. Hence,
\[P(n) < T^{A \log \log T} = o(x).\]
So, we may assume that $n = o(x^{1/2})$. We may ignore such $n$.

\section{Optimizing parameters}

Here are the bounds we obtained (for all $\delta > 0$):
\[V_1 = O\left(\frac{x}{(\log x)^{1 - e(\log 2)/2 - \delta}}\right),\]
\[V_2 = O\left(\frac{x}{(\log T)^{A \log A - A + 1}}\right),\]
\[V_3 = O\left(\frac{x}{(\log T)^{(A + (1/2))\log(A + (1/2)) - A + (1/2) - \delta}}\right).\]
The previous section states that if $\varphi(m) = P(n)$, then we may assume that $m$ is not $T$-smooth. Therefore, $V_P (x)$ is at most the sum of our upper bounds for $V_1$, $V_2$, and $V_3$.

We now optimize our bounds for $V_2$ and $V_3$. As $A$ increases, $V_2$ increases and $V_3$ decreases. We set $V_2$ and $V_3$ approximately equal:
\[\frac{x}{(\log T)^{A \log A - A + 1}} = \frac{x}{(\log T)^{(A + (1/2))\log(A + (1/2)) - A + (1/2)}},\]
which implies that
\[A \log A - A + 1 = (A + (1/2))\log(A + (1/2)) - A + (1/2) .\]
The solution is $A \approx 0.76$. Plugging in this value shows that
\[V_2 + V_3 \ll \frac{x}{(\log T)^{0.0312 - \delta}}.\]
Recall that $T = \exp(((1 - \delta)/A)(\log x/\log \log x))$. Therefore,
\[V_P (x) = O\left(\frac{x}{(\log x)^{0.0312 - \delta}}\right).\]

\section{Conclusion}

One short proof that the range of Euler's $\varphi$-function has density zero uses the following result of Erd{\H o}s and Wagstaff \cite{EW}.
\begin{thm} For all $\epsilon > 0$, there exists a $T = T(\epsilon)$ such that the upper density of numbers $n$ for which $p - 1 | n$ for some $p > T$ is less than $\epsilon$.
\end{thm}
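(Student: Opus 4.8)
The plan is to fix $\epsilon>0$ and produce a threshold $T=T(\epsilon)$ for which $\overline{d}(\mathcal A_T)<\epsilon$, where $\mathcal A_T=\{n:\ p-1\mid n\text{ for some prime }p>T\}$; note that $n\in\mathcal A_T$ exactly when $n$ has a divisor $d>T$ with $d+1$ prime. I would in fact aim for the stronger estimate, uniform in $x$,
\[
\#\{n\le x:\ n\in\mathcal A_T\}\ll\frac{x}{(\log T)^{c}}
\]
for some absolute $c>0$, which is more than enough. The first point is what one must \emph{not} do: the union bound $\#\{n\le x:\ n\in\mathcal A_T\}\le\sum_{T<p\le x+1}\lfloor x/(p-1)\rfloor\ll x(\log\log x-\log\log T)$ is worthless, because the residue classes $(p-1)\mid n$ overlap so heavily that the average over $n\le x$ of the number of witnessing divisors $d$ is already $\asymp\log\log x$. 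That average is inflated by a thin set of $n$ of abnormal multiplicative type (large $\Omega(n)$, or an abnormally large $T$-smooth part): a typical $n\le x$ has only $\ll(\log T)^{\log 2-1}$ divisors $d>T$ with $d+1$ prime, and the whole difficulty is to turn this distinction into a genuine cardinality bound.

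Concretely I would proceed in two steps. First, discard the atypical $n$: by the Hardy--Ramanujan (or Tur\'{a}n--Kubilius) bound, for each fixed $T$ all but $O(x(\log x)^{-\eta})$ of the $n\le x$ have $\Omega(n)\le C\log\log x$ and $T$-smooth part at most $\exp((\log\log x)^2)$, while the fully $T$-smooth $n\le x$ number $\Psi(x,T)=o_x(1)\cdot x$. For the $n$ that survive, I would use the sharpening that we may take $d$ to be $d^{*}(n)$, the \emph{largest} divisor of $n$ with $d^{*}+1$ prime; then $n=d^{*}v$ with $v=n/d^{*}<x/T$, and maximality of $d^{*}$ forces $d^{*}w+1$ to be composite for every divisor $w>1$ of $v$. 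Writing $d^{*}=p-1$ and summing over the prime $p>T$ by Theorem~3.3, applied to an appropriate multiplicative function of $p-1$, while bounding the number of admissible cofactors $v$ by Brun's sieve (using Brun--Titchmarsh for the primality restrictions on the numbers $d^{*}w+1$), one should save a power of $\log T$ over the trivial count $x/(p-1)$ and obtain the displayed bound; choosing $T$ large then finishes the proof.

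The hard part will be making this final sieve estimate genuinely close. A single global moment does not suffice: one can show $\sum_{n\le x}z^{\Omega(n)}\#\{p>T:\ (p-1)\mid n\}\ll x(\log T)^{z-1}$ for $z<1$ via Theorem~3.3, but recovering a cardinality from this costs a factor $z^{-\Omega(n)}\asymp(\log x)^{\lvert\log z\rvert}$ on the typical $n$, which swamps the gain and diverges in $x$. So one must work scale by scale instead: bound the number of $n\le x$ having a divisor in a dyadic block $(2^{j},2^{j+1}]$ whose successor is prime, using the Erd\H{o}s--Ford--Tenenbaum estimate for integers with a divisor in a dyadic interval \emph{together with} the extra $1/j$ saving from primality of that successor, and then sum over $j>\log_2 T$; the combined per-scale bound should decay like $j^{-1-\delta}$ for some $\delta>0$, so the tail over $j$ converges and is $\ll(\log T)^{-\delta}$. (Equivalently, one runs the $d^{*}$-argument above as a recursion on the $T$-rough part of $n$.) The genuinely delicate point is purely anatomy-of-integers bookkeeping --- making sure that neither the passage between $n$ and $n/d^{*}$, nor the case of a $T$-smooth shift $p-1$ (where even $\sum_{p>T,\ p-1\ T\text{-smooth}}1/(p-1)$ is merely $O(1)$, not $o(1)$), leaks a divergent factor. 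The analytic ingredients themselves --- Brun's sieve, Brun--Titchmarsh, Theorem~3.3, $\Psi(x,T)=o(x)$, Mertens' theorem, and the divisor-in-an-interval bound --- are all standard.
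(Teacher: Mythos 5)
The paper does not actually prove this statement: Theorem~8.1 is quoted from Erd\H{o}s and Wagstaff \cite{EW}, so there is no internal proof to measure yours against. Judged on its own, your proposal is a plan rather than a proof, and the gap sits exactly at its quantitative core. The parts you do carry out are correct and well diagnosed: the union bound fails because $\sum_{p>T}1/(p-1)$ diverges with $x$; the exceptional $n$ (large $\Omega(n)$, large $T$-smooth part, fully $T$-smooth) can be discarded; and a single global moment $\sum_{n\le x}z^{\Omega(n)}\#\{p>T:(p-1)\mid n\}$ cannot be converted into a cardinality bound uniformly in $x$. But all of that is preparatory. The theorem is carried entirely by the asserted per-scale estimate that the number of $n\le x$ with a divisor $p-1\in(2^j,2^{j+1}]$ is $\ll x\,j^{-1-\delta}$, and that assertion does not follow from the ``standard'' ingredients you list. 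Even without the primality condition, counting $n\le x$ with \emph{some} divisor in a dyadic interval is not a first-moment or Brun-sieve computation --- it requires the Erd\H{o}s--Tenenbaum--Ford analysis of the distribution of divisors --- and inserting the condition that the divisor is a shifted prime to gain the extra factor $1/j$ is the content of a separate, genuinely difficult theorem of Ford on shifted-prime divisors in intervals. If you are allowed to quote that theorem, your dyadic summation closes the argument and even gives a rate $\ll x(\log T)^{-\delta}$; as written, you have asserted the crux rather than proved it, and the classical Erd\H{o}s--Wagstaff proof certainly did not go through such machinery.

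Your fallback via the largest divisor $d^{*}$ of $n$ with $d^{*}+1$ prime does not repair this. Maximality only says that $d^{*}w+1$ is composite for every divisor $w>1$ of $v=n/d^{*}$; the integers $w$ with $d^{*}w+1$ prime have density $O(1/\log)$ in any range, so forbidding $v$ from having a divisor in that sparse set is a negligible constraint, and Brun's sieve extracts essentially no saving from it. (Contrast the paper's Section~3, where the analogous factorization $P(n)=(p-1)v$ is useful only because $v$ is forced into the sparse set $\varphi(\Z_+)$ --- a constraint with no counterpart here.) A more promising elementary dichotomy, and one consistent with the paper's own Sections~4--6, is on the multiplicative structure of $p-1$ rather than on the divisor structure of $n$: Theorem~3.3(b) gives $\sum_{p>T,\ \Omega(p-1)\le\alpha\log\log p}1/(p-1)\ll(\log T)^{-(\alpha\log\alpha-\alpha+1)}$ for $\alpha<1$, so the ``atypical'' shifted primes are handled by a convergent union bound, and one must then show that divisibility by a ``typical'' $p-1>T$ forces $n$ itself into a set of small upper density. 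Your write-up never identifies such a dichotomy, so the proof as proposed is incomplete.
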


\begin{cor} We have $V(x) = o(x)$.
\end{cor}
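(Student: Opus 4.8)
The plan is to combine Theorem $8.1$ with the elementary fact that there are very few $T$-smooth integers below a polynomially large bound. Fix $\epsilon > 0$ and let $T = T(\epsilon)$ be as in Theorem $8.1$, so that the set $\mathcal{B}_T = \{n : p - 1 \mid n \text{ for some prime } p > T\}$ has upper density less than $\epsilon$. For each $n \leq x$ lying in the range of $\varphi$, I would fix a preimage $m_n$ with $\varphi(m_n) = n$. Since distinct values $n$ force distinct $m_n$, bounding the number of these $m_n$ bounds $V(x)$ from above, and I would split the $m_n$ into two classes according to whether $m_n$ has a prime factor exceeding $T$.

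If $m_n$ has a prime factor $p > T$, then $p - 1 \mid \varphi(m_n) = n$, so $n \in \mathcal{B}_T$; by the choice of $T$ the number of such $n \leq x$ is at most $\epsilon x$ for all sufficiently large $x$. If instead $m_n$ is $T$-smooth, then $\varphi(m_n) = m_n \prod_{p \mid m_n}(1 - 1/p) \geq m_n \prod_{p \leq T}(1 - 1/p) \gg_T m_n$, so $m_n \ll_T n \leq x$; and the number of $T$-smooth integers up to a bound $y$ is at most $\prod_{p \leq T}(1 + \log y/\log p) = O_T\big((\log y)^{\pi(T)}\big)$, which is $o(x)$ as $x \to \infty$ with $T$ held fixed. (Alternatively, one could bound $m_n \ll x \log\log x$ via Hardy--Wright, Theorem $328$, exactly as in Section $3$; this changes nothing, since the smooth count remains polylogarithmic in $x$.)

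Adding the two contributions gives $V(x) \leq \epsilon x + o(x)$, hence $\limsup_{x \to \infty} V(x)/x \leq \epsilon$; letting $\epsilon \to 0$ yields $V(x) = o(x)$. There is no genuine obstacle here — the argument is a short wrapper around Theorem $8.1$. The only points that need a moment's care are the injectivity of the preimage assignment $n \mapsto m_n$, the observation that $\varphi$ of a $T$-smooth number is within a $T$-dependent constant of that number (so smoothness of $m_n$ forces $m_n$ to be only $O_T(x)$, not merely $O(x^{2+o(1)})$), and the fact that the count of $T$-smooth integers below a polynomially large bound grows only like a power of $\log x$ and is therefore negligible beside $\epsilon x$.
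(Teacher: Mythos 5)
Your proposal is correct and follows essentially the same route as the paper: split according to whether the preimage $m$ has a prime factor exceeding $T$, invoke Theorem $8.1$ for the first class, and use the sparsity of $T$-smooth numbers for the second. The only cosmetic difference is that you count the $T$-smooth preimages $m_n \ll_T x$ directly (via injectivity of $n \mapsto m_n$ and the bound $\varphi(m) \gg_T m$), whereas the paper observes that $n = \varphi(m)$ is itself $T$-smooth and counts those values of $n \leq x$; both give a polylogarithmic count and the same conclusion.
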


\begin{proof} Let $\epsilon > 0$. Suppose $\varphi(m) = n$. There exists a $T$ such that
\[\overline{\mathbf{d}} (\{n : \exists p > T\ \textrm{s.t.}\ p - 1 | n\}) < \epsilon/2.\]
Suppose $n$ has no such divisor $p - 1$. Then, $m$ is $T$-smooth. Therefore, $n$ is $T$-smooth as well. The density of $T$-smooth numbers is $0$. For any $\epsilon > 0$, the upper density of $\varphi(\Z_+) < \epsilon$. Hence, $V(x) = o(x)$.
\end{proof}

If we wanted to do a similar argument for polynomials in the range of the $\varphi$-function, we would need to prove the following variant of Theorem $8.1$.

\begin{conj} For all $\epsilon > 0$ and all polynomials $P(x)$, there exists a $T = T(\epsilon, P)$ such that the upper density of numbers $n$ for which $p - 1 | P(n)$ for some $p > T$ is less than $\epsilon$.
\end{conj}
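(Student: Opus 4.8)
The plan is to reduce the conjecture to the linear statement of Erd{\H o}s and Wagstaff (Theorem 8.1) by running its proof with $P(n)$ in place of $n$, recycling the estimates of Sections 3--5 wherever they apply. An $n \le x$ is counted only when $P(n)$ has a divisor $d > T$ with $d + 1$ prime, and since $P(n) \le 2ax^2$ such a $d$ lies in $(T, 2ax^2]$; write $d = p - 1$ and $P(n) = dv$. Split the range of $d$ dyadically. At the very top, $d > x^2/(\log x)^K$, the cofactor $v = P(n)/d$ is below a fixed power of $\log x$, and counting the $n \le x$ with $P(n) = dv$ and $d + 1$ prime by Brun's sieve exactly as in the proof of Theorem 3.1 gives, after summing the resulting bound $x\rho(v)/(v \log(x/v))$ over this very short range of $v$, a contribution that is $o(x)$ uniformly in $T$. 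In the complementary range one would isolate the ``rare'' witnessing primes: if $p = d + 1$ has $\Omega_T(p - 1)$ much smaller than its normal order $\log\log T$, the $B^{\Omega_T(p-1)}$-weighting device used to prove Theorem 4.1, fed into Theorem 3.3 for the multiplicative function $d \mapsto B^{\Omega_T(d)}\rho(d)$, bounds the number of such $n$ --- though only up to a factor $\log\log x$ coming from the width of the admissible range of $p$, which must somehow be absorbed.

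What is left --- and this is where I expect to be stuck --- are the $n$ whose every witnessing prime $p$ is ``typical'', with $\Omega_T(p-1)$ of normal order, and has $d = p - 1$ in the wide middle range $x^{\delta_0} < d \le x^2/(\log x)^K$. Here Corollary 5.4 does not help: it controls only those $n$ with $\Omega_T(P(n))$ strictly larger than $(1 + \epsilon)\log\log T$, whereas a single typical witness contributes $\Omega_T(P(n)) \approx \log\log T$. Nor does a first-moment (union) bound over $p$: the sum $x\sum_{T < p \le 2ax^2} \rho(p - 1)/(p-1)$ counts each $n$ once for every suitable divisor of $P(n)$ and diverges like $x\log\log x$, so it cannot be made smaller than $\epsilon x$ for fixed $T$. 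The Erd{\H o}s--Wagstaff proof of Theorem 8.1 overcomes exactly this by a subtler argument: for almost all primes $p$, $p - 1$ is forced to carry many small prime factors, hence many divisors spread across all dyadic scales, so the events $\{p - 1 \mid n\}$ for large $p$ overlap enough that their \emph{union} has small density even though the densities sum to infinity.

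The main obstacle is therefore to transplant that gain from $n$ to $P(n)$. That appears to require knowing how often $P(n)$ has a divisor of the special shape $p - 1$ inside a prescribed interval $(D, 2D]$ --- a statement about the distribution of divisors of $P(n)$ in short intervals, of the Erd{\H o}s--Ford ``multiplication table'' type --- with enough uniformity in $T$ to defeat the divergence above; or, equivalently, a sieve bound for the disjunction ``$(P(n)/v) + 1$ is prime for some admissible $v$'' that does not collapse into a union bound. I do not see how to extract such an input from the machinery of this paper, which is why the statement is recorded only as a conjecture; I would expect any complete proof to hinge on precisely such a divisor-distribution estimate for $P(n)$, in the spirit of Nair's multiplicative-function bounds used in Section 5 but tracking divisors rather than prime factors.
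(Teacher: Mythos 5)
This statement is recorded in the paper only as a conjecture; the paper contains no proof of it, and your submission, quite rightly, does not claim to supply one. There is therefore no ``paper proof'' to compare against, and nothing in your write-up to mark as a gap in the usual sense: you have correctly recognized that the statement is open and have not manufactured a spurious argument.

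Your diagnosis of the obstruction is accurate and consistent with why the paper leaves this as a conjecture. The first-moment bound $x\sum_{T < p \le 2ax^{2}} \rho(p-1)/(p-1)$ does indeed grow like $x(\log\log x - \log\log T)$, so for fixed $T$ it cannot be driven below $\epsilon x$; and the Erd\H{o}s--Wagstaff mechanism in the linear case exploits the overlap of the events $\{p - 1 \mid n\}$ via the divisor structure of $n$ itself, which does not transfer to $P(n)$ without new input on how the divisors of $P(n)$ distribute among dyadic intervals and among the special values $p-1$. Note also that the paper's own route to $V_{P}(x) = o(x)$ deliberately avoids this conjecture: it conditions on the \emph{largest} prime $p$ with $p \mid m$ and $\varphi(m) = P(n)$, so that the cofactor $v$ is itself a totient (Section 3) or $m$ is forced to be $T$-smooth (Section 6), extra structure that a bare divisor $p - 1 \mid P(n)$ does not provide. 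Your observation that Corollary 5.4 only controls $n$ with abnormally many small prime factors of $P(n)$, and says nothing about a single typical witness, is exactly the right reason why the machinery of Sections 4--5 does not close the argument.
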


Though we have already showed that $V_P (x) = o(x)$ for irreducible quadratic $P$, the conjecture still possesses independent interest in this case. We also ask what bounds one can obtain when $P$ is reducible.

\end{document}